
\documentclass[journal,twoside,web]{ieeecolor}
\usepackage{generic}
\usepackage{cite}

\usepackage{graphicx,algorithm,algpseudocode,amsthm,amsmath,amsfonts,verbatim,mathtools,enumerate,bm,hyperref,mathabx,hhline,multirow,tikz,braket}

\usepackage[style=base]{caption}
\usepackage{subcaption}

\usetikzlibrary{positioning,calc}
\usepackage[flushleft]{threeparttable}
\pdfminorversion=4
\allowdisplaybreaks


\newcommand{\diag}{\mathop{\bf diag}}
\newcommand{\blkdiag}{\mathop{\bf blkdiag}}


\newcommand{\argmin}{\mathop{\rm argmin}}



\newcommand{\norm}[1]{\left\lVert#1\right\rVert}
\newcommand{\mnorm}[1]{{\left\vert\kern-0.25ex\left\vert\kern-0.25ex\left\vert #1 
    \right\vert\kern-0.25ex\right\vert\kern-0.25ex\right\vert}}

\newtheorem{theorem}{Theorem}
\newtheorem{lemma}{Lemma}

\newtheorem{remark}{Remark}

\newtheorem{assumption}{Assumption}

\newcommand{\eg}{{\it e.g.}}
\newcommand{\ie}{{\it i.e.}}

\usepackage{textcomp}
\def\BibTeX{{\rm B\kern-.05em{\sc i\kern-.025em b}\kern-.08em
    T\kern-.1667em\lower.7ex\hbox{E}\kern-.125emX}}

\begin{document}
\title{Proportional-Integral Projected Gradient Method for Model Predictive Control}
\author{Yue~Yu, Purnanand~Elango, and Beh\c{c}et~A\c{c}\i kme\c{s}e
\thanks{Accepted to IEEE Control System Letters, available at \url{https://ieeexplore.ieee.org/document/9295329}, DOI: 10.1109/LCSYS.2020.3044977}
\thanks{The authors are with the Department of Aeronautics and Astronautics, University of Washington, Seattle,
        WA 98195 USA (emails: 
         yueyu@uw.edu,pelango@uw.edu,behcet@uw.edu).}}

\maketitle
\thispagestyle{empty}
\pagestyle{empty}

\begin{abstract}
Recently there has been an increasing interest in primal-dual methods for model predictive control (MPC), which require minimizing the (augmented) Lagrangian at each iteration. We propose a novel first order primal-dual method, termed \emph{proportional-integral projected gradient method}, for MPC where the underlying finite horizon optimal control problem has both state and input constraints. Instead of minimizing the (augmented) Lagrangian, each iteration of our method only computes a single projection onto the state and input constraint set. Our method ensures that, along a sequence of averaged iterates, both the distance to optimum and the constraint violation converge to zero at a rate of \(O(1/k)\) if the objective function is convex, where \(k\) is the iteration number. If the objective function is strongly convex, this rate can be improved to \(O(1/k^2)\) for the distance to optimum and \(O(1/k^3)\) for the constraint violation. We compare our method against existing methods via a trajectory-planning example with convexified keep-out-zone constraints.
\end{abstract}

\begin{IEEEkeywords}
Model predictive control, optimization algorithms
\end{IEEEkeywords}

\section{Introduction}
\IEEEPARstart{M}{odel} predictive control (MPC) provides a systematic approach for automatic control with physical and operational constraints \cite{mayne2000constrained,mayne2014model,qin2003survey,kouro2008model,eren2017model}. The key ingredient in MPC is solving a finite horizon discrete-time convex optimal control problem that can be expressed in the following form 
\begin{equation}\label{opt: QP}
    \begin{array}{ll}
    \underset{z}{\mbox{minimize}} & \frac{1}{2}z^\top H z+h^\top z \\
    \mbox{subject to} & Gz=g, \enskip z\in \mathbb{Z},
    \end{array}
\end{equation}
where the trajectory variable \(z\) aims to minimize a convex quadratic cost function \(\frac{1}{2}z^\top H z+h^\top z\) subject to linear dynamics constraints \(Gz=g\) together with convex state and input constraint  \(z\in\mathbb{Z}\). 
Throughout we assume \(\mathbb{Z}\) is the Cartesian product of convex sets whose Euclidean projection can be evaluated at low computational cost. Such an assumption applies to many practical state and input constraints in MPC \cite{ullmann2011fiordos,richter2011computational,richter2013certification,jerez2014embedded}; see Tab.~\ref{tab: projection} and \cite{bauschke1996projection} for some examples. In addition to convex MPC problems, solution to problem \eqref{opt: QP} is also an integral part of problems with nonlinear dynamics and non-convex constraints. In these cases, a sequence of convex sub-problems modeled by \eqref{opt: QP} are solved to obtain the solution of the original non-convex problem, as done in sequential convex programming \cite{ghaemi2009integrated,bonalli2019gusto} and successive convexification methods \cite{mao2016successive,mao2017successive,mao2018successive}.  

\begin{table}[ht]
\begin{threeparttable}
\caption{Examples of simple convex sets and projections}\label{tab: projection}
  \centering
  \begin{tabular}{|c|c|}
  \hline
  Set \(\mathbb{X}\) & Projection of \(x\) onto \(\mathbb{X}\)  if \(x\notin \mathbb{X}\) \\
  \hhline{==} 
  \(\{x|\norm{x}_2\leq \alpha\}\) & \(\frac{\alpha}{\norm{x}_2}x\)\\
  \hline 
  \(\{x| l\leq x\leq u\}\) & \(\min\{\max\{x, l\}, u\}\)\\
  \hline 
  \(\{x| \langle a, x\rangle\leq \alpha, a\neq 0\}\) & \(x-(\langle a, x\rangle-\alpha)\frac{a}{\norm{a}_2}\)\\
  \hline 
  \(\{x=(y, \alpha)| \norm{y}_2\leq \alpha \}\) &
  \begin{tabular}{c}\((0, 0)\) if \(\norm{y}_2\leq -\alpha\); \\
  \(\frac{\norm{y}_2+\alpha}{2\norm{y}_2}(y, \norm{y}_2)\) otherwise \end{tabular}
  \\
  \hline
  \(\{x=(y, \alpha)| f(y)\leq \alpha\}\) & \begin{tabular}{c}\((x, f(x))\) where\\
  \(x\) solves \(y\in x+(f(x)-\alpha)\partial f(x)\)\end{tabular}\\
  \hline
  \(\{x| f(x)\leq \alpha\}\) & \begin{tabular}{c} \((I+\mu\partial f)^{-1}(x)\) where \\
  \(\mu\) solves \(f((I+\mu\partial f)^{-1}(x))=\alpha\)
  \end{tabular} \\
  \hline
  \end{tabular}
  \begin{tablenotes}
      \scriptsize
      \item Here function \(f:\mathbb{R}^n\to\mathbb{R}\) is continuous, convex and finite valued, \(\partial f\) denotes the subdifferential of \(f\), \(\max\)/\(\min\) is evaluated element-wise.
    \end{tablenotes}
  \end{threeparttable}
\end{table}

Recently there has been an increasing interest in first order primal-dual methods for MPC. Such methods solve problem \eqref{opt: QP} together with its dual problem by updating both primal and dual variables at each iteration. For example, the dual fast gradient method first updates the primal variables by optimizing the Lagrangian, then the dual variables using Nesterov's method \cite{richter2013certification,patrinos2013accelerated,giselsson2014improved}. Similarly, the Chambolle \& Pock method first updates the primal variables by optimizing the augmented Lagrangian, then the dual variables using gradient ascent with extrapolation \cite{kufoalor2014embedded}. The alternating directional method of multipliers (ADMM) first updates two copies of the primal variables by optimizing the augmented Lagrangian: one subject to \(Gz=g\), and the other subject to \(z\in\mathbb{Z}\). The dual variables then simply integrate the difference between the two copies \cite{o2013splitting,jerez2014embedded,sokoler2014input,dang2015embedded}. Compared with second order methods \cite{wang2009fast} and first order primal methods \cite{richter2011computational}, first order primal-dual methods allow both efficient per-iteration computation and general state and input constraints. 

The common challenge in implementing the aforementioned primal-dual methods is optimizing the (augmented) Lagrangian during each iteration. In general, such optimization requires either inner loop iterations that costs multiple projections onto set \(\mathbb{Z}\) \cite{kogel2011fast}, or the solution to system of linear equations that demands Ricatti recursion \cite{patrinos2011global,sokoler2014input} or pre-computing matrix inverse/decomposition \cite{giselsson2014improved,o2013splitting,jerez2014embedded}.

To address this challenge, several primal-dual methods have been developed recently, where each iteration only computes a single projection onto set \(\mathbb{Z}\), rather than minimizing the augmented Lagrangian \cite{blanchard2017sor,chambolle2016ergodic,seidman2018chebyshev}. However, these results still have their limitations. In particular, no convergence rate was proved in \cite{blanchard2017sor}. On the other hand, while \cite[Thm. 1, Thm. 4]{chambolle2016ergodic} and \cite[Thm. 1]{seidman2018chebyshev} proved the convergence rate of a non-negative duality gap function along a sequence of averaged iterates (also known as ergodic convergence), they however provide no convergence rate for the affine constraint violation, which is commonly used as a stopping criterion \cite{patrinos2013accelerated}.

We propose a novel primal-dual method for MPC problem \eqref{opt: QP}, termed \emph{proportional-integral projected gradient method}, that also computes a single projection onto set \(\mathbb{Z}\) per iteration. Our method ensures that, along a sequence of averaged iterates, both the distance to optimum and the constraint violation converges to zero at a rate of \(O(1/k)\) if the objective function is convex, where \(k\) is the iteration number. If the objective function is strongly convex, this rate can be improved to \(O(1/k^2)\) for the distance to optimum and \(O(1/k^3)\) for the constraint violation.

The rest of the paper is organized as follows. After reviewing some related work in Section~\ref{section: related}, Section~\ref{section: method} introduces our method together with its convergence guarantee. Section~\ref{section: implementation} discusses the implementation of our method on a tracking problem common in MPC. Section~\ref{section: experiments} demonstrates our results via a trajectory-planning example. Finally, Section~\ref{section: conclusion} concludes and comments on future directions. 

\paragraph*{Notation} Let \(z, y\in\mathbb{R}^n\) and \(H, P\in\mathbb{R}^{n\times n}\) with \(H=H^\top, P=P^\top\). We denote \(\langle z, y\rangle=z^\top y\), \(\norm{z}_2=\sqrt{z^\top z}\),  \(\norm{z}_H=\sqrt{z^\top H z}\). We say \(H \prec (\preceq) P\) if and only if \(P-H\) is positive (semi-)definite. The Euclidean projection onto a closed convex set \(\mathbb{Z}\subset \mathbb{R}^n\)  is denoted by \(\pi_{\mathbb{Z}}:\mathbb{R}^n\to\mathbb{Z}\) where
\[\pi_{\mathbb{Z}}[z]=\underset{z'\in\mathbb{Z}}{\argmin}\norm{z'-z}_2.\]

\section{Related work}
\label{section: related}
In this section, we briefly review some existing first order primal-dual methods for MPC. In the following, we let \(k\) and \(j\) denote iteration counters, and \(\alpha\) the step size.

\subsection{Dual fast gradient method}
Assuming \(H\succ 0\) in \eqref{opt: QP}, dual fast gradient method \cite{richter2013certification,patrinos2013accelerated,giselsson2014improved} solves problem \eqref{opt: QP} as follows
\begin{subequations}
    \begin{align}
    z^{k+1}&=\underset{z\in\mathbb{Z}}{\argmin}\,\, \textstyle \frac{1}{2}z^\top H z+h^\top z+\langle v^k, Gz\rangle,\label{eqn: Nesterov primal} \\
    w^{k+1}&=\textstyle v^k+\alpha(G z^{k+1}-g),\\
    v^{k+1}&=\textstyle w^{k+1}+ \frac{k}{k+3} (w^{k+1}-w^k).
    \end{align}\label{alg: Nesterov}
\end{subequations}
The idea is to apply Nesterov's method \cite[Sec. 2.2]{nesterov2018lectures} to the dual problem of \eqref{opt: QP}. In general, the minimization step in \eqref{eqn: Nesterov primal} can only be solved approximately using another inner loop of Nesterov's method \cite{kogel2011fast}, which iterates as follows \cite[Sec. 2.2]{nesterov2018lectures} (\(j\) denotes the inner loop iteration counter)
\begin{equation}\label{alg: Nesterov inner loop}
    \begin{aligned}
        z^{j+1}&=\textstyle \pi_{\mathbb{Z}}[y^j-\frac{1}{\lambda}(Hy^j+h+G^\top v^k)],\\
        y^{j+1}&=\textstyle z^{j+1}+\frac{\sqrt{\lambda}-\sqrt{\mu} }{\sqrt{\lambda}+\sqrt{\mu}}(z^{j+1}-z^j),
    \end{aligned}
\end{equation}
where \(0\prec \mu I\preceq H \preceq \lambda I\).

\subsection{ADMM}
One of the most popular methods for problem \eqref{opt: QP} is ADMM \cite{o2013splitting,jerez2014embedded,sokoler2014input,dang2015embedded}, which iterates as follows
\begin{subequations}
    \begin{align}
    y^{k+1}&=\underset{z:Gz=g}{\argmin}\,\, \textstyle \frac{1}{2}z^\top H z+h^\top z+ \textstyle \frac{1}{2\alpha}\norm{z+w^k-y^k}_2^2,\label{eqn: ADMM hyperplane} \\
    z^{k+1}&=\pi_{\mathbb{Z}}[y^{k+1}+w^k],\label{eqn: ADMM projection}\\
    w^k&=w^k+z^{k+1}-y^{k+1}.
    \end{align}\label{alg: admm}
\end{subequations}
Notice that ADMM solves two subproblems for primal variables: minimization of a quadratic function over a hyperplane in \eqref{eqn: ADMM hyperplane} and the projection in \eqref{eqn: ADMM projection}. The minimization in \eqref{eqn: ADMM hyperplane} is equivalent to solving the following system of linear equations for variable \(z\)
\begin{equation}\label{eqn: ADMM linear eqns}
    \begin{bmatrix}
    H+\frac{1}{\alpha} I & G^\top\\
    G & 0
    \end{bmatrix}\begin{bmatrix}
    z\\
    v
    \end{bmatrix}=\begin{bmatrix}-h-\frac{1}{\alpha}(w^k-y^k)\\
    g\end{bmatrix},
\end{equation}
which requires pre-computing either matrix inverse \cite{jerez2014embedded} or LDL decomposition \cite{o2013splitting}. If both matrix \(H\) and \(G\) are time invariant, such pre-computation only needs to be executed once. However, for time varying applications, \eg, those from nonlinear MPC \cite{kouzoupis2018recent}, such precomputation needs to be executed every time matrix \(H\) or \(G\) is updated.
\subsection{Chambolle \& Pock method}
The \emph{Chambolle \& Pock method} \cite{kufoalor2014embedded} has been used to solve problem \eqref{opt: QP} through the following iterates
\begin{subequations}
    \begin{align}
    z^{k+1}&=\underset{z\in\mathbb{Z}}{\argmin}\,\, \textstyle  \frac{1}{2}z^\top H z+h^\top z+ \frac{1}{2\alpha}\norm{z+\alpha G^\top w^k-z^k}_2^2\label{eqn: CP old primal}\\
    w^{k+1}&=w^k+\alpha (G(2z^{k+1}-z^k)-g).
    \end{align}\label{alg: CP old}
\end{subequations}
Recently, a more efficient variant of \eqref{alg: CP old} was introduced in \cite{chambolle2016ergodic}, which replaces \eqref{eqn: CP old primal} with a single projection onto \(\mathbb{Z}\). If \(H\succeq 0\) in \eqref{opt: QP}, this method achieves \(O(1/k)\) convergence rate using the following iterates \cite[Alg. 1]{chambolle2016ergodic}
\begin{subequations}
    \begin{align}
    z^{k+1}&=\pi_{\mathbb{Z}}[z^k-\alpha(Hz^k+h+G^\top w^k)]\\
    w^{k+1}&=w^k+\beta (G(2z^{k+1}-z^k)-g). \label{alg: CP const w}
    \end{align}\label{alg: CP const}
\end{subequations}
Furthermore, if \(H\succ 0\), an improved convergence rate of \(O(1/k^2)\) is possible using the following iterates \cite[Alg. 4]{chambolle2016ergodic}
\begin{subequations}
    \begin{align}
    w^{k+1}&=w^k+\beta^k (G(z^k+\gamma^k(z^k-z^{k-1}))-g),\\
    z^{k+1}&=\pi_{\mathbb{Z}}[z^k-\textstyle \frac{\alpha^k}{\mu\alpha^k+1}(Hz^k+h+G^\top w^{k+1})]
    \end{align}\label{alg: CP var}
\end{subequations}
where step sizes \((\alpha^k, \beta^k, \gamma^k)\) are computed recursively; see \cite[Sec. 5.2]{chambolle2016ergodic} for details. A different version of \eqref{alg: CP const} was introduced in \cite{blanchard2017sor}, where \eqref{alg: CP const w} is replaced with \(w^{k+1}=w^k+\beta (Gz^{k+1}-g)\). However, no convergence rate was provided in \cite{blanchard2017sor}. On the other hand, the convergence of algorithm \eqref{alg: CP const} and \eqref{alg: CP var} are both proved using a non-negative running duality gap function \cite[Thm. 1, Thm. 4]{chambolle2016ergodic}. However, these results provide no convergence rates on constraint violation amount, namely \(\norm{Gz^k-g}_2^2\).

\section{Proportional-integral projected gradient method}
\label{section: method}
We now introduce \emph{proportional-integral projected gradient method} for problem \eqref{opt: QP}. We will show that, along certain sequences of averaged iterates, our method achieves a convergence rate of \(O(1/k)\) and \(O(1/k^2)\) when matrix \(H\) is positive semi-definite and, respectively, positive definite.

The method we propose iterates as follows
\begin{subequations}
    \begin{align}
    v^k&=w^k+\beta^k(Gz^k-g)\label{eqn: pd proportional},\\
    z^{k+1}&=\pi_{\mathbb{Z}}[z^k-\alpha^k ( Hz^k+h+G^\top v^k)]\label{eqn: pd proj grad},\\
    w^{k+1}&=w^k+\beta^k (Gz^{k+1}-g)\label{eqn: pd integral}.
    \end{align}\label{alg: PI}
\end{subequations}
\begin{remark}
Notice that \eqref{eqn: pd proportional} and \eqref{eqn: pd integral} compute a proportional and, respectively, integral feedback of the affine constraints violation. Hence an intuitive interpretation of  \eqref{alg: PI} is applying projected gradient method to variable \(z\), where the gradient is corrected by a proportional-integral (PI) feedback. Similar PI feedback was also used in distributed optimization algorithms \cite{wang2010control,seidman2018chebyshev,yu2020mass,yu2020rlc}.
\end{remark}

We will prove that method \eqref{alg: PI} achieves \(O(1/k)\) and \(O(1/k^2)\) convergence rate when matrix \(H\) is positive semi-definite and, respectively, positive definite. The latter is optimal for solving problem \eqref{opt: QP} using first order methods \cite[Thm.1.1]{ouyang2019lower}.

We now prove the convergence properties of method \eqref{alg: PI}. First, we group our assumptions as follows.  

\begin{assumption}\label{asp: basic}
Suppose
\begin{enumerate}
    \item set \(\mathbb{Z}\subset\mathbb{R}^n\) is closed and convex; matrix \(H\in\mathbb{R}^{n\times n}\) is symmetric, matrix \(G\in\mathbb{R}^{m\times n}\) has full row rank; there exists \(\mu, \lambda, \sigma \in\mathbb{R}\) with \(0\leq \mu\leq \lambda\) and \(\sigma\geq 0\) such that \(\mu I\preceq H\preceq \lambda I\) and \(G^\top G\preceq \sigma I\).
    \item there exists \(z^\star\in\mathbb{R}^n\) and \(w^\star\in\mathbb{R}^m\) such that
\begin{subequations}\label{eqn: KKT}
\begin{align}
Gz^\star=g, \enskip  z^\star&\in \mathbb{Z},\label{eqn: KKT primal}\\
\langle Hz^\star+h+G^\top w^\star, z-z^\star\rangle &\geq 0, \enskip \forall z\in\mathbb{Z}.\label{eqn: KKT dual}
\end{align}
\end{subequations}
\end{enumerate}
\end{assumption}

\begin{remark}
Equation \eqref{eqn: KKT} gives the Karush–Kuhn–Tucker conditions of problem \eqref{opt: QP}. Under the Slater condition for equalities, equation \eqref{eqn: KKT} holds if and only if  \(z^\star\) is an optimal solution for problem \eqref{opt: QP}; see \cite[Thm.3.1.27]{nesterov2018lectures}.
\end{remark}

We will use the following result on  Euclidean projection.
\begin{lemma}\cite[Lemma. 2.2.7]{nesterov2018lectures}\label{lem: projection} If set \(\mathbb{Z}\subset\mathbb{R}^n\) is closed and convex, then
\[\langle \pi_{\mathbb{Z}}[z]-z, z'-\pi_{\mathbb{Z}}[z]\rangle\geq 0,\enskip \forall z\in\mathbb{R}^n,z'\in\mathbb{Z}.\]
\end{lemma}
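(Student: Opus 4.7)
The statement is the standard variational characterization of Euclidean projection onto a closed convex set, so the plan is to give the textbook first-order optimality argument.

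My plan is to fix an arbitrary \(z\in\mathbb{R}^n\) and \(z'\in\mathbb{Z}\), then exploit convexity of \(\mathbb{Z}\) to build a one-parameter family of admissible points. Concretely, for \(t\in[0,1]\), the point \(p(t)\coloneqq \pi_{\mathbb{Z}}[z]+t(z'-\pi_{\mathbb{Z}}[z])\) is a convex combination of two elements of \(\mathbb{Z}\), hence lies in \(\mathbb{Z}\). By the defining property of the projection, the scalar function
\[
\varphi(t)=\tfrac{1}{2}\norm{p(t)-z}_2^2
\]
attains its minimum over \([0,1]\) at \(t=0\), because \(\pi_{\mathbb{Z}}[z]\) is by definition the unique minimizer of \(\norm{\,\cdot\,-z}_2\) over \(\mathbb{Z}\) and \(p(t)\in\mathbb{Z}\).

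Since the minimum of \(\varphi\) on \([0,1]\) is attained at the left endpoint \(t=0\), the one-sided derivative satisfies \(\varphi'(0^{+})\geq 0\). A direct computation gives
\[
\varphi'(0)=\langle \pi_{\mathbb{Z}}[z]-z,\; z'-\pi_{\mathbb{Z}}[z]\rangle,
\]
and the claimed inequality follows immediately. The existence and uniqueness of \(\pi_{\mathbb{Z}}[z]\) needed to make this argument rigorous come from closedness and convexity of \(\mathbb{Z}\) together with strong convexity of \(\norm{\,\cdot\,-z}_2^2\); these are standard and can be invoked without further comment.

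There is no genuine obstacle here: the only thing to be careful about is to restrict to \(t\in[0,1]\) (so that \(p(t)\) stays in \(\mathbb{Z}\)) and therefore use only the one-sided derivative at \(0\) rather than a two-sided stationarity condition. Equivalently, one could derive the same inequality by writing \(\norm{p(t)-z}_2^2-\norm{\pi_{\mathbb{Z}}[z]-z}_2^2\geq 0\), expanding, dividing by \(t>0\), and letting \(t\downarrow 0\); this avoids any appeal to calculus beyond limits of difference quotients. Either route gives the lemma in a few lines.
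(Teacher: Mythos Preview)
Your argument is correct and is exactly the standard first-order optimality proof of the projection inequality. Note that the paper itself does not prove this lemma at all; it simply quotes it as \cite[Lemma~2.2.7]{nesterov2018lectures}, so there is no ``paper's own proof'' to compare against, and your proposal supplies the routine justification that the cited reference would give.
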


The following lemma shows the key property of any two consecutive iterations generated by method \eqref{alg: PI}. 

\begin{lemma}\label{lem: iteration}
Suppose Assumption~\ref{asp: basic} holds and sequence \(\{z^k, w^k\}\) is generated by \eqref{alg: PI}. If \(\lambda+\sigma\beta^k=\frac{1}{\alpha^k}\) for all \(k\geq 1\), then
\begin{equation*}
\begin{aligned}
    &\textstyle 
    \frac{\beta^k}{2}\norm{Gz^k-g}_2^2+\frac{1}{2}\norm{z^{k+1}-z^\star}_H^2\\
    \leq &\textstyle \frac{1}{2}(\frac{1}{\alpha^k}-\mu)\norm{z^k-z^\star}_2^2+ \frac{1}{2\beta^k}\norm{w^k-w^\star}_2^2\\
    &\textstyle-\frac{1}{2\alpha^k}\norm{z^{k+1}-z^\star}_2^2-\frac{1}{2\beta^k}\norm{w^{k+1}-w^\star}_2^2.
\end{aligned}    
\end{equation*}
\end{lemma}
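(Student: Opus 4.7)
The plan is to chain together two first-order optimality statements (the projection lemma applied to the primal update, and the KKT condition at the optimum) and then algebraically massage the resulting inequality into the desired energy form using polarization identities and the three spectral bounds $\mu I\preceq H\preceq \lambda I$ and $G^\top G\preceq \sigma I$. The step-size relation $\lambda+\sigma\beta^k=1/\alpha^k$ will be used at the very end to make a stray $\|z^k-z^{k+1}\|_2^2$ term vanish.

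Concretely, first I would invoke Lemma~\ref{lem: projection} on the projection defining $z^{k+1}$, taking $z=z^k-\alpha^k(Hz^k+h+G^\top v^k)$ and $z'=z^\star$, to obtain
\[
\alpha^k\langle Hz^k+h+G^\top v^k,\, z^{k+1}-z^\star\rangle \le \langle z^k-z^{k+1},\, z^{k+1}-z^\star\rangle.
\]
Next I would subtract $\alpha^k$ times the KKT condition \eqref{eqn: KKT dual} evaluated at the feasible test point $z=z^{k+1}\in\mathbb{Z}$, which eliminates the constant $h$ and replaces the gradient by its ``error'' form $H(z^k-z^\star)+G^\top(v^k-w^\star)$. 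Using $Gz^\star=g$, the second summand becomes $\langle v^k-w^\star,\, Gz^{k+1}-g\rangle$.

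Then I would expand every inner product with the polarization identity $2\langle a,b\rangle=\|a+b\|_2^2-\|a\|_2^2-\|b\|_2^2$ (and its $H$-weighted version for the $H$ term). The update \eqref{eqn: pd proportional} gives $v^k-w^\star=(w^k-w^\star)+\beta^k(Gz^k-g)$, while \eqref{eqn: pd integral} gives $w^{k+1}-w^k=\beta^k(Gz^{k+1}-g)$, so $\langle v^k-w^\star,Gz^{k+1}-g\rangle$ telescopes into a clean difference $\tfrac{1}{2\beta^k}(\|w^{k+1}-w^\star\|_2^2-\|w^k-w^\star\|_2^2)$ plus $\tfrac{\beta^k}{2}\|Gz^k-g\|_2^2$ and a residual $-\tfrac{\beta^k}{2}\|G(z^k-z^{k+1})\|_2^2$. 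The $H$-weighted expansion for $\langle H(z^k-z^\star),z^{k+1}-z^\star\rangle$ produces $\tfrac{1}{2}\|z^{k+1}-z^\star\|_H^2$ and $\tfrac{1}{2}\|z^k-z^\star\|_H^2$ (which I bound below by $\tfrac{\mu}{2}\|z^k-z^\star\|_2^2$ via strong convexity) together with another residual $-\tfrac{1}{2}\|z^k-z^{k+1}\|_H^2$.

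The only step I expect to require real care is the disposal of the three residual terms in $z^k-z^{k+1}$: collecting them yields $\bigl(\tfrac{1}{2\alpha^k}-\tfrac{1}{2}\|\cdot\|_H/\|\cdot\|_2^2\text{-bound}-\tfrac{\beta^k}{2}\|G\cdot\|_2^2/\|\cdot\|_2^2\text{-bound}\bigr)\|z^k-z^{k+1}\|_2^2$. Bounding $\|z^k-z^{k+1}\|_H^2\le\lambda\|z^k-z^{k+1}\|_2^2$ and $\|G(z^k-z^{k+1})\|_2^2\le \sigma\|z^k-z^{k+1}\|_2^2$ reduces the coefficient to $\tfrac{1}{2}\bigl(\tfrac{1}{\alpha^k}-\lambda-\sigma\beta^k\bigr)$, which is exactly zero by the hypothesized step-size condition. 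After these residuals cancel, what remains is precisely the claimed inequality, so no further work is needed.
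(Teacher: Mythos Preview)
Your proposal is correct and follows essentially the same route as the paper's proof: apply the projection lemma at $z^{k+1}$, combine with the KKT inequality \eqref{eqn: KKT dual} at $z=z^{k+1}$, expand every inner product via law-of-cosines/polarization identities (the paper lists them as \eqref{eqn: LOS primal}--\eqref{eqn: LOS dual}), apply the three spectral bounds $\mu I\preceq H\preceq \lambda I$ and $G^\top G\preceq \sigma I$, and finally use $\lambda+\sigma\beta^k=1/\alpha^k$ to kill the leftover $\|z^k-z^{k+1}\|_2^2$ coefficient. The only cosmetic difference is that the paper expands $v^k=w^k+\beta^k(Gz^k-g)$ before writing down the projection inequality, whereas you keep $v^k$ intact and do the substitution when telescoping the dual term; the algebra is otherwise identical.
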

\begin{proof}
First, applying Lemma~\ref{lem: projection} to \eqref{eqn: pd proj grad} gives
\begin{equation}\label{eqn: projection optimality}
\begin{aligned}
0\leq &\textstyle \frac{1}{\alpha^k}\langle z^{k+1}-z^k, z^\star-z^{k+1}\rangle+\beta^k\langle Gz^k-g, g-Gz^{k+1}\rangle\\
&+\langle Hz^k+h, z^\star-z^{k+1}\rangle+\langle  w^k, G(z^\star-z^{k+1})\rangle,
\end{aligned}
\end{equation}
where we also used \eqref{eqn: pd proportional} and \eqref{eqn: KKT primal}. 
Next, \eqref{eqn: KKT dual} implies that
\begin{equation}\label{eqn: optimality}
    0\leq -\langle H z^\star+h, z^\star-z^{k+1}\rangle-\langle w^\star, G(z^\star-z^{k+1})\rangle.
\end{equation}
In addition, one can directly verify the following four identities, which can be interpreted as instances of the \emph{law of cosines}; see Fig.~\ref{fig: los} for an illustration.
\begin{equation}\label{eqn: LOS primal}
     \begin{aligned}
     &\langle z^{k+1}-z^k, z^\star-z^{k+1}\rangle\\
     =&\textstyle \frac{1}{2}\norm{z^k-z^\star}_2^2-\frac{1}{2}\norm{z^{k+1}-z^\star}_2^2-\frac{1}{2}\norm{z^{k+1}-z^k}_2^2,
     \end{aligned}
\end{equation}
\begin{equation}\label{eqn: LOS augmented}
    \begin{aligned}
    &\langle Gz^k-g, g-Gz^{k+1} \rangle\\
    =&\textstyle \frac{1}{2}\norm{G(z^{k+1}-z^k)}_2^2-\frac{1}{2}\norm{Gz^k-g}_2^2-\frac{1}{2}\norm{Gz^{k+1}-g}_2^2,
    \end{aligned}
\end{equation}
\begin{equation}\label{eqn: LOS H}
    \begin{aligned}
        &\langle H^{\frac{1}{2}}(z^k-z^\star), H^{\frac{1}{2}}(z^\star-z^{k+1})\rangle\\
        =&\textstyle \frac{1}{2}\norm{z^{k+1}-z^k}_H^2-\frac{1}{2}\norm{z^k-z^\star}_H^2-\frac{1}{2}\norm{z^{k+1}-z^\star}_H^2,
    \end{aligned}
\end{equation}
\begin{equation}\label{eqn: LOS dual}
    \begin{aligned}
    &\textstyle\frac{1}{2}\norm{w^{k+1}-w^\star}^2_2-\frac{1}{2}\norm{w^k-w^\star}_2^2\\
    =&\textstyle \langle w^k-w^\star, w^{k+1}-w^k\rangle+\frac{1}{2}\norm{w^{k+1}-w^k}_2^2\\
    =&\beta^k\langle w^k-w^\star, G(z^{k+1}-z^\star) \rangle+\textstyle \frac{(\beta^k)^2}{2}\norm{Gz^{k+1}-g}_2^2,
    \end{aligned} 
\end{equation}
where matrix \(H^{\frac{1}{2}}\) in \eqref{eqn: LOS H} is the positive semi-definite square root of \(H\), and the last step in \eqref{eqn: LOS dual} is due to \eqref{eqn: pd integral} and \eqref{eqn: KKT primal}.
\begin{figure}[ht]
    \centering
    \begin{tikzpicture}[scale=0.5]
\coordinate (A) at (0,0);
\coordinate (B) at (2,0);
\coordinate (C) at (1,1.5);

\node at (1, 0.9) {$\theta$};

\draw (A) -- (B) node[midway,below] {$c$}
   -- (C) node[midway,right] {$a$}
   -- (A) node[midway,left] {$b$};

\node at (8, 0.75) {\(-ab\cos\theta=\frac{1}{2}c^2-\frac{1}{2}a^2-\frac{1}{2}b^2\)};

\end{tikzpicture}
    \caption{The law of cosines.}
    \label{fig: los}
\end{figure}
Further, the assumption that \(0\preceq \mu I\preceq H\preceq \lambda I\) and \(G^\top G\preceq \sigma I\) implies the following
\begin{subequations}
\begin{align}
    \textstyle \frac{\mu}{2}\norm{z^k-z^\star}_2^2\leq & \textstyle\frac{1}{2}\norm{z^k-z^\star}_H^2,\label{eqn: quad lower bound}\\
    \textstyle \frac{1}{2}\norm{z^{k+1}-z^k}_H^2\leq & \textstyle \frac{\lambda}{2}\norm{z^{k+1}-z^k}_2^2,\label{eqn: quad upper bound}\\
    \textstyle \frac{1}{2}\norm{G(z^{k+1}-z^k)}_2^2\leq &\textstyle  \frac{\sigma}{2}\norm{z^{k+1}-z^k}_2^2.\label{eqn: quad upper bound 2}
    \end{align}
\end{subequations}
Finally, summing up together \eqref{eqn: projection optimality}, \eqref{eqn: optimality},  \(\frac{1}{\alpha^k}\times\)\eqref{eqn: LOS primal}, \(\beta^k\times\)\eqref{eqn: LOS augmented}, \eqref{eqn: LOS H},  \(\frac{1}{\beta^k}\times\)\eqref{eqn: LOS dual}, \eqref{eqn: quad lower bound}, \eqref{eqn: quad upper bound} and \(\beta^k\times \)\eqref{eqn: quad upper bound 2}, then using the assumption that \(\lambda+\sigma\beta^k=\frac{1}{\alpha^k}\), we obtain the desired result.
\end{proof}

With the above lemma, we are now ready to prove the convergence properties of method \eqref{alg: PI} under different assumptions on matrix \(H\). We start with the case where matrix \(H\) is only positive semi-definite, \ie, \(\mu=0\) in Assumption~\ref{asp: basic}. The following theorem shows that, along a sequence of averaged iterates generated by \eqref{alg: PI}, both the quadratic distance to optimum and constraint violation converge to zero at the rate of \(O(1/k)\).
\begin{theorem}\label{thm: convex}
Suppose Assumption~\ref{asp: basic} hold with \(\mu=0\), and sequence \(\{v^k, z^k, w^k\}\) is generated by \eqref{alg: PI} with \(\alpha^k=\frac{1}{\beta\sigma+\lambda}\) and \(\beta^k=\beta\) for some \(\beta>0\) and all \(k\geq 1\). Let \(V^1=\frac{1}{2\alpha }\norm{z^1-z^\star}_2^2+\frac{1}{2\beta}\norm{w^1-w^\star}_2^2\), then
\begin{equation*}
 \textstyle \frac{1}{2}\norm{G\hat{z}^k-g}_2^2
    \leq  \frac{1}{\beta k}V^1, \enskip \textstyle \frac{1}{2}\norm{\tilde{z}^k-z^\star}_H^2
    \leq \textstyle \frac{1}{k}V^1.
\end{equation*}
where \(\hat{z}^k=\frac{1}{k}\sum_{j=1}^k z^j\) and \(\tilde{z}^k=\frac{1}{k}\sum_{j=1}^k z^{j+1}\).
\end{theorem}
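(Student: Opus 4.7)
The plan is to invoke Lemma~\ref{lem: iteration} under the stated parameter choices, telescope the resulting one-step inequality, and then convert the bound on the running sums into bounds on the averaged iterates via Jensen's inequality.

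First I would specialize Lemma~\ref{lem: iteration} to the present setting: since $\mu=0$, $\beta^k=\beta$, and $\alpha^k = \alpha = 1/(\beta\sigma+\lambda)$, the relation $\lambda + \sigma\beta^k = 1/\alpha^k$ is automatic, so the lemma yields
\begin{equation*}
\tfrac{\beta}{2}\|Gz^k-g\|_2^2 + \tfrac{1}{2}\|z^{k+1}-z^\star\|_H^2 \le V^k - V^{k+1},
\end{equation*}
where $V^k := \tfrac{1}{2\alpha}\|z^k-z^\star\|_2^2 + \tfrac{1}{2\beta}\|w^k-w^\star\|_2^2$. This is the key telescoping inequality.

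Next I would sum this inequality for $j=1,\ldots,k$. The right-hand side collapses to $V^1 - V^{k+1} \le V^1$ (using $V^{k+1}\ge 0$, which holds since $\beta,\alpha>0$). This produces
\begin{equation*}
\sum_{j=1}^{k}\tfrac{\beta}{2}\|Gz^j-g\|_2^2 \;+\; \sum_{j=1}^{k}\tfrac{1}{2}\|z^{j+1}-z^\star\|_H^2 \;\le\; V^1,
\end{equation*}
and in particular each of the two nonnegative sums on the left is bounded by $V^1$.

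Finally I would pass from sums to averages. Since $G\hat{z}^k - g = \tfrac{1}{k}\sum_{j=1}^{k}(Gz^j - g)$ and $\|\cdot\|_2^2$ is convex, Jensen's inequality gives $\|G\hat{z}^k-g\|_2^2 \le \tfrac{1}{k}\sum_{j=1}^k \|Gz^j-g\|_2^2$, hence $\tfrac{\beta k}{2}\|G\hat{z}^k-g\|_2^2 \le V^1$, which is the first claim. Likewise, because $H\succeq 0$ the map $z\mapsto \|z\|_H^2$ is convex, so Jensen's inequality applied to $\tilde z^k - z^\star = \tfrac{1}{k}\sum_{j=1}^k(z^{j+1}-z^\star)$ gives $\tfrac{k}{2}\|\tilde z^k - z^\star\|_H^2 \le V^1$, which is the second claim.

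The argument is essentially a packaging step on top of Lemma~\ref{lem: iteration}, so I anticipate no serious obstacle; the only thing to watch is verifying that the parameter choices make the hypothesis $\lambda+\sigma\beta^k=1/\alpha^k$ hold and that $V^{k+1}\ge 0$ so the telescoping bound loses nothing important. Both are immediate.
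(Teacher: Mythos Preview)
Your proposal is correct and follows essentially the same route as the paper: specialize Lemma~\ref{lem: iteration} to the constant step sizes, telescope $V^j-V^{j+1}$ over $j=1,\ldots,k$, drop the nonnegative $V^{k+1}$, and apply Jensen's inequality to each of the two resulting sums. If anything, you spell out the justification for Jensen (convexity of $\|\cdot\|_2^2$ and of $\|\cdot\|_H^2$ via $H\succeq 0$) a bit more explicitly than the paper does.
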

\begin{proof}
With this choice of \(\alpha^k\) and \(\beta^k\), the inequality in Lemma~\ref{lem: iteration} becomes the following: for all \(j\geq 1\),
\begin{equation*}
    \textstyle \frac{\beta}{2}\norm{Gz^j-g}_2^2+\frac{1}{2}\norm{z^{j+1}-z^\star}_H^2\leq V^j-V^{j+1},
\end{equation*}
where \(V^j= \frac{1}{2\alpha}\norm{z^j-z^\star}_2^2+\frac{1}{2\beta}\norm{w^j-w^\star}_2^2\). Summing up this inequality for \(j=1, \ldots, k\) gives
\begin{equation*}
\begin{aligned}
      &\textstyle \sum_{j=1}^k\big(\frac{\beta}{2}\norm{Gz^j-g}_2^2+\frac{1}{2}\norm{z^{j+1}-z^\star}_H^2\big)\\
      \leq &V^1-V^{k+1}\leq V^1
\end{aligned}
\end{equation*}
where the last step is because \(V^{k+1}\geq 0\). Hence
\begin{equation*}
    \begin{aligned}
      \textstyle \frac{\beta}{2}\sum_{j=1}^k\norm{Gz^j-g}_2^2
      \leq V^1, \enskip   \frac{1}{2}\sum_{j=1}^k\norm{z^{j+1}-z^\star}_H^2
      \leq V^1, 
    \end{aligned}
\end{equation*}
Finally, applying Jensen's inequality to the above two inequalities gives the desired results.
\end{proof}

If matrix \(H\) is positive definite, \ie, \(\mu>0\) in Assumption~\ref{asp: basic}, then, along two different sequences of averaged iterates, the \(O(1/k)\) rate in Theorem~\ref{thm: convex} can be improved to \(O(1/k^2)\) for the quadratic distance to optimum and \(O(1/k^3)\) for constraint violation. 

\begin{theorem}\label{thm: strongly convex}
Suppose Assumption~\ref{asp: basic} hold with \(\mu>0\), and sequence \(\{v^k, z^k, w^k\}\) is generated by \eqref{alg: PI} with \(\alpha^k=\frac{2}{(k+1)\mu+2\lambda}\), \(\beta^k= \frac{(k+1)\mu}{2\sigma}\) for all \(k\geq 1\). Let \(V^1=\frac{1}{2(\mu+\lambda) }\norm{z^1-z^\star}_2^2+\frac{\sigma}{2\mu}\norm{w^1-w^\star}_2^2\), then
\begin{equation*}
\begin{aligned}
    \textstyle \frac{1}{2}\norm{G\hat{z}^k-g}_2^2\leq &\textstyle  \frac{12\lambda\sigma}{\mu^2 k(k^2+6k+11)}V^1,\\
    \textstyle \frac{1}{2}\norm{\tilde{z}^k-z^\star}_H^2
    \leq &\textstyle \frac{4\lambda}{\mu k(k+5)}V^1,
\end{aligned}
\end{equation*}
where \(\hat{z}^k=\textstyle \frac{3}{k(k^2+6k+11)}\sum_{j=1}^k (j+1)(j+2) z^j\) and \(\tilde{z}^k=\textstyle \frac{2}{k(k+5)}\sum_{j=1}^k (j+2)z^{j+1}\). 
\end{theorem}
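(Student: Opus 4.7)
The plan is to mirror the proof of Theorem~\ref{thm: convex}, but weight the per-step inequality from Lemma~\ref{lem: iteration} by $\eta^j = j+2$ before summing, so that the induced averaging matches the weights appearing in $\hat z^k$ and $\tilde z^k$. First I would verify that the chosen $\alpha^k$ and $\beta^k$ indeed satisfy $\lambda+\sigma\beta^k = 1/\alpha^k$, so Lemma~\ref{lem: iteration} applies at every step. The choice $\eta^j = j+2$ is then forced by Jensen compatibility: the summed coefficient $\eta^j\beta^j/2 = (j+1)(j+2)\mu/(4\sigma)$ of $\norm{Gz^j-g}_2^2$ is proportional to the averaging weight $(j+1)(j+2)$ on $z^j$ in $\hat z^k$, while the coefficient $\eta^j/2 = (j+2)/2$ of $\norm{z^{j+1}-z^\star}_H^2$ matches the weight $(j+2)$ on $z^{j+1}$ in $\tilde z^k$.

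Multiplying Lemma~\ref{lem: iteration} at step $j$ by $\eta^j$, summing over $j=1,\ldots,k$, and re-indexing the telescoping $-\eta^j/(2\alpha^j)\norm{z^{j+1}-z^\star}_2^2$ and $-\eta^j/(2\beta^j)\norm{w^{j+1}-w^\star}_2^2$ terms, the RHS decomposes into a boundary contribution $\tfrac{3\lambda}{2}\norm{z^1-z^\star}_2^2+\tfrac{3\sigma}{2\mu}\norm{w^1-w^\star}_2^2$, a non-positive terminal contribution in $\norm{z^{k+1}-z^\star}_2^2$ and $\norm{w^{k+1}-w^\star}_2^2$, and interior contributions that evaluate to $\tfrac{\lambda-\mu}{2}\norm{z^j-z^\star}_2^2-\tfrac{\sigma}{\mu j(j+1)}\norm{w^j-w^\star}_2^2$ for $j=2,\ldots,k$. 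The $w$-part of each interior contribution is non-positive and hence favourable; the non-negative $z$-residual $\tfrac{\lambda-\mu}{2}\norm{z^j-z^\star}_2^2$ together with the boundary excess $\tfrac{3(\lambda-\mu)}{2}\norm{z^1-z^\star}_2^2$ is the main obstruction. To absorb these I would use the strong convexity lower bound $\tfrac{1}{2}\norm{z^{j+1}-z^\star}_H^2\geq\tfrac{\mu}{2}\norm{z^{j+1}-z^\star}_2^2$ in the LHS summand $\tfrac{\eta^j}{2}\norm{z^{j+1}-z^\star}_H^2$, which after re-indexing produces $\sum_{i=2}^{k+1}\tfrac{(i+1)\mu}{2}\norm{z^i-z^\star}_2^2$ on the LHS; for $i$ large enough this dominates $(\lambda-\mu)/2$ and absorbs the residual pointwise. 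For the remaining early indices and for the boundary term, I would invoke an auxiliary estimate obtained by summing Lemma~\ref{lem: iteration} \emph{unweighted}: the per-step gap $\tfrac{1}{2\alpha^j}-\tfrac{\mu}{2}-\tfrac{1}{2\alpha^{j-1}}=-\tfrac{\mu}{4}$ that surfaces in the unweighted telescoping yields $\sum_{j=2}^{k}\norm{z^j-z^\star}_2^2\leq\tfrac{2\lambda}{\mu}\norm{z^1-z^\star}_2^2+\tfrac{2\sigma}{\mu^2}\norm{w^1-w^\star}_2^2$, which controls the stubborn residuals.

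After the bookkeeping I expect the master estimate $\sum_{j=1}^k \eta^j\bigl(\tfrac{\beta^j}{2}\norm{Gz^j-g}_2^2+\tfrac{1}{2}\norm{z^{j+1}-z^\star}_H^2\bigr)\leq\tfrac{2\lambda}{\mu}V^1$, with the coefficient $\tfrac{1}{2(\mu+\lambda)}$ of $\norm{z^1-z^\star}_2^2$ in $V^1$ emerging precisely as the normalization needed to simultaneously cover the weighted-telescoping boundary term and the unweighted-telescoping residual bound. The two stated bounds then follow from Jensen's inequality applied to $\norm{\cdot}_2^2$ and $\norm{\cdot}_H^2$ with normalizers $\sum_{j=1}^k (j+1)(j+2) = k(k^2+6k+11)/3$ and $\sum_{j=1}^k (j+2) = k(k+5)/2$ respectively. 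I expect the hardest step to be the tight coefficient bookkeeping in the residual-absorption argument: reaching exactly the constant $2\lambda/\mu$ and not a larger multiple likely requires assembling the weighted and unweighted telescopings into a single integrated Lyapunov quantity containing both $\norm{z^j-z^\star}_H^2$ and $\norm{z^j-z^\star}_2^2$ pieces, rather than combining them as two independent estimates.
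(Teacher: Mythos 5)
Your proposal is correct in its essentials but takes a genuinely different route at the one step where the paper is clever. The paper multiplies the Lemma~\ref{lem: iteration} inequality by the weight \(j+2\kappa\) with \(\kappa=\lambda/\mu\), chosen precisely so that the identity \((\tfrac{1}{\alpha^j}-\mu)(j+2\kappa)=\tfrac{1}{\alpha^{j-1}}(j+2\kappa-1)\) makes the \(z\)-part telescope \emph{exactly} (and the \(w\)-part telescope with a favourable inequality, using \(\kappa\geq 1\)); it then passes to the weights \((j+1)(j+2)\) and \((j+2)\) simply via \(j+2\kappa\geq j+2\). You instead weight by \(j+2\) directly, which leaves the nonnegative interior residual \(\tfrac{\lambda-\mu}{2}\norm{z^j-z^\star}_2^2\) you correctly identify, and you absorb it with an auxiliary \emph{unweighted} telescoping of Lemma~\ref{lem: iteration}; that auxiliary estimate \(\tfrac{\mu}{4}\sum_{j=2}^k\norm{z^j-z^\star}_2^2\leq\tfrac{\lambda}{2}\norm{z^1-z^\star}_2^2+\tfrac{\sigma}{2\mu}\norm{w^1-w^\star}_2^2\) is valid and, pleasingly, it alone yields a pre-Jensen bound with boundary coefficients \(\lambda(\kappa+\tfrac12)\) and \(\tfrac{\sigma}{\mu}(\kappa+\tfrac12)\) --- exactly what the paper's weighted telescoping produces at \(j=1\). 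So your extra machinery buys nothing but also costs nothing beyond length. Two cautions. First, drop the strong-convexity absorption idea: you cannot spend \(\tfrac{\mu}{2}\norm{z^{j+1}-z^\star}_2^2\) out of the left-hand side to cancel residuals and still keep the full \(\tfrac{j+2}{2}\norm{z^{j+1}-z^\star}_H^2\) for the final Jensen step; as shown, the unweighted auxiliary bound suffices on its own. Second, do not expect the boundary bookkeeping to land exactly on \(\tfrac{2\lambda}{\mu}V^1\) with the theorem's literal \(V^1\): the stated coefficient \(\tfrac{1}{2(\mu+\lambda)}\) is dimensionally inconsistent with what either telescoping produces (it should be on the order of \(\tfrac{1}{2\alpha^1}=\tfrac{\mu+\lambda}{2}\), and the \(w\)-coefficient on the order of \(\tfrac{\sigma}{\mu}\)); this mismatch is present in the paper's own proof as well, so treat the target constant as \(O(\kappa)V^1\) rather than reverse-engineering the printed \(V^1\). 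The final Jensen step with normalizers \(\sum_{j=1}^k(j+1)(j+2)=\tfrac{k(k^2+6k+11)}{3}\) and \(\sum_{j=1}^k(j+2)=\tfrac{k(k+5)}{2}\) is exactly as in the paper.
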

\begin{proof}
With this choice of \(\alpha^k\) and \(\beta^k\), the inequality in Lemma~\ref{lem: iteration} becomes the following: for all \(j\geq 1\),
\begin{equation}\label{eqn: thm2 eqn1}
\begin{aligned}
    &\textstyle 
    \frac{(j+1)\mu}{4\sigma}\norm{Gz^j-g}_2^2+\frac{1}{2}\norm{z^{j+1}-z^\star}_H^2\\
    \leq &\textstyle \frac{1}{2}(\frac{1}{\alpha^j}-\mu)\norm{z^j-z^\star}_2^2+\frac{1}{2\beta^j}\norm{w^j-w^\star}_2^2-V^{j+1},
\end{aligned}    
\end{equation}
where \(V^j= \frac{1}{2\alpha^{j-1}}\norm{z^j-z^\star}_2^2+\frac{1}{2\beta^{j-1}}\norm{w^j-w^\star}_2^2\). Let \(\kappa=\lambda/\mu\geq 1\), then it is straightforward to verify the following 
\begin{equation}\label{eqn: thm2 eqn2}
\begin{aligned}
    \textstyle (\frac{1}{\alpha^j}-\mu)(j+2\kappa)= &\textstyle  \frac{1}{\alpha^{j-1}}(j+2\kappa-1),\\
    \textstyle \frac{1}{\beta^j}(j+2\kappa)\leq &\textstyle \frac{1}{\beta^{j-1}}(j+2\kappa-1).
\end{aligned}
\end{equation}
Hence multiplying \eqref{eqn: thm2 eqn1} with \((j+2\kappa)\) and substituting in \eqref{eqn: thm2 eqn2} we can show
\begin{equation*}
\begin{aligned}
     &\textstyle \frac{(j+1)(j+2\kappa)\mu}{4\sigma}\norm{Gz^j-g}_2^2+\frac{j+2\kappa}{2}\norm{z^{j+1}-z^\star}_H^2\\
    \leq &(j+2\kappa-1)V^j-(j+2\kappa)V^{j+1}.
\end{aligned}
\end{equation*}
Summing up this inequality for \(j=1, 2, \ldots, k\) gives
\begin{equation*}
\begin{aligned}
    &\textstyle \sum_{j=1}^k\big(\frac{(j+1)(j+2\kappa)\mu}{4\sigma}\norm{Gz^j-g}_2^2+\frac{j+2\kappa}{2}\norm{z^{j+1}-z^\star}_H^2\big)\\
    \leq &2\kappa V^1-(k+2\kappa)V^{k+1}\leq 2\kappa V^1.
\end{aligned}
\end{equation*}
where the last step is because \(V^{k+1}\geq 0\). Since \(\kappa\geq 1\), the above inequality implies the following
\begin{equation*}
    \begin{aligned}
    \textstyle \sum_{j=1}^k\frac{(j+1)(j+2)\mu}{4\sigma}\norm{Gz^j-g}_2^2\leq & 2\kappa V^1,\\
      \textstyle \sum_{j=1}^k\frac{j+2}{2}\norm{z^{j+1}-z^\star}_H^2\leq &2\kappa V^1.
    \end{aligned}
\end{equation*}
Finally, applying Jensen's inequality to the above two inequalities and using \(\kappa=\lambda/\mu\) gives the desired results. 
\end{proof}

Notice that Theorem~\ref{thm: strongly convex}, in contrast to Theorem~\ref{thm: convex}, uses a weighted average of iterates similar to those in subgradient method \cite{lacoste2012simpler} and accelerated ADMM \cite{xu2017accelerated}.

\begin{remark}
Theorem~\ref{thm: strongly convex} not only establishes the \(O(1/k^2)\) convergence rate of the quadratic distance to the optimum, but also shows the \(O(1/k^3)\) convergence rate of the constraint violation. To our best knowledge, the latter has never been proven for any existing methods that achieve \(O(1/k^2)\) convergence rate \cite{richter2013certification,patrinos2013accelerated,giselsson2014improved,chambolle2016ergodic,blanchard2017sor,seidman2018chebyshev}.
\end{remark}

\section{Efficient implementation for MPC}
\label{section: implementation}
In this section, we provide the pseudocode implementation of method \eqref{alg: PI} for the following tracking problem
\begin{equation}
\begin{array}{ll}
    \underset{\{u_{t-1}, x_t\}_{t=1}^T}{\mbox{minimize}} &  \frac{1}{2}\sum_{t=1}^{T} \norm{x_t-y_t}_{Q_t}^2+\frac{1}{2}\sum_{t=0}^{T-1} \norm{u_t}_{R_t}^2\\
    \mbox{subject to}  & x_t=A_{t-1}x_{t-1}+B_{t-1}u_{t-1}, \\
    & u_{t-1}\in \mathbb{U}_{t-1},\,x_t\in \mathbb{X}_t, \enskip 1\leq t\leq T. 
\end{array}
\label{opt: MPC}
\end{equation}
where, for all \(1\leq t\leq T\): closed convex sets \(\mathbb{X}_t\subset \mathbb{R}^{n_x}\) and \(\mathbb{U}_{t-1}\subset\mathbb{R}^{n_u}\) describe feasible sets for state variable \(x_t\) and, respectively, input variable \(u_{t-1}\); \(A_{t-1}\in\mathbb{R}^{n_x\times n_x}, B_{t-1}\in\mathbb{R}^{n_x\times n_u}\) describe the linear dynamics of the plant; \(y_t\) gives the reference value for \(x_t\). 

We first rewrite problem \eqref{opt: MPC} as a special case of problem \eqref{opt: QP} by defining the following
\begin{equation}\label{eqn: lifting}
\begin{aligned}
    z=&[u_0^\top, x_1^\top, \ldots, u_{T-1}^\top,x_T^\top]^\top, \enskip \mathbb{Z}=\textstyle \prod_{t=1}^T(\mathbb{U}_{t-1}\times \mathbb{X}_t),\\
    H=&\blkdiag(R_0,Q_1, \ldots, R_{T-1},Q_T),\\
    h=& [0^\top,-y_1^\top Q_1, \ldots, 0^\top, -y_T^\top Q_T]^\top,\\
    G=&\begin{bsmallmatrix}
   -B_0 & I    &  &     &      &   &     \\
       & -A_1  & -B_1   &I     &      & &        \\
        & & \ddots & \ddots& \ddots & &\\
     && &  & -A_{T-1}  & -B_{T-1} & I     \\
    \end{bsmallmatrix}\\
g=&[x_0^\top A_0^\top, 0^\top, 
    \cdots,
    0^\top]^\top.
\end{aligned}
\end{equation}

We are now ready to implement \eqref{alg: PI} for problem \eqref{opt: MPC}. We partition variables \(w\) and \(v\) as follows
\begin{equation}
    v =[v_1^\top, v_2, \ldots, v_T^\top]^\top, \enskip w =[w_1^\top, w_2, \ldots, w_T^\top]^\top,
\end{equation}
where \(v_t, w_t\in\mathbb{R}^{n_x}\) corresponds to constraint \(x_t=A_{t-1}x_{t-1}+B_{t-1}u_{t-1}\) for \(1\leq t \leq T\). In addition, the separable structure of set \(\mathbb{Z}\) defined by \eqref{eqn: lifting} allows separable computation of its Euclidean projection. Based on these observations, we implement algorithm \eqref{alg: PI} for problem \eqref{opt: MPC} in Algorithm~\ref{code: PI}, where we introduce dummy parameters \(A_Tv_{T+1}\equiv 0\) to simplify our notation. Notice that updates of variables corresponding to different value of \(t\) can be executed in parallel, hence the algorithm run-time can be almost independent of horizon \(T\).

\begin{algorithm}
\caption{PI projected gradient method}
\label{code: PI}
\begin{algorithmic}
\Require \(x_0\); \(\mathbb{X}_t, \mathbb{U}_{t-1}\),  \(Q_t, y_t, R_{t-1},A_{t-1}, B_{t-1}\) for all \(1\leq t\leq T\). Initialize \(k=1\), \(u_{t-1}, x_t, w_t\) for all \(1\leq t\leq T\); let \(A_Tv_{T+1}\equiv 0\).
\While{\(k\leq k_{\max}\)}
\State \(k\gets k+1\)
\State For all \(1\leq t\leq T\):
\State \(v_t\gets w_t+\beta^k (x_t-A_{t-1}x_{t-1}-B_{t-1}u_{t-1})\)
\State \(u_{t-1}\gets \pi_{\mathbb{U}_{t-1}} [u_{t-1}-\alpha^k(R_{t-1}u_{t-1}-B_{t-1}^\top v_t)]\)
\State \(x_t\gets \pi_{\mathbb{X}_t}[x_t-\alpha^k(Q_t(x_t-y_t)+ v_t-A_t^\top v_{t+1})]\)
\State \(w_t\gets w_t +\beta^k (x_t-A_{t-1}x_{t-1}-B_{t-1}u_{t-1})\)
\EndWhile
\Ensure \(\{u_0, x_1, \ldots, u_{T-1}, x_T\}\)
\end{algorithmic}
\end{algorithm}

\section{Numerical Examples}
\label{section: experiments}
In this section we compare our method against the existing methods reviewed in Section~\ref{section: related} over a trajectory-planning problem with keep-out-zone constraints, where all parameters are chosen as unit-less for simplicity. 

\begin{figure}[ht]
    \centering
    \begin{tikzpicture}[scale=0.5]

\filldraw[color=red, fill=red!10] (-2,0) arc (180:0:2);
\draw[->] (-0.43,2.46) arc (100:80:2.5);

\node at (-5,0.5)[circle,fill,inner sep=1pt]{};
\node at (4,0.5)[circle,fill,inner sep=1pt]{};

\filldraw[color=blue!10, fill=blue!10, rotate around={150:(0,0)}] (2,-1.8) rectangle (3.5, 1.8) node[pos=.5, rotate around={60:(0,0)}] {\scriptsize \color{blue} rotating halfspace};

\draw[color=blue, rotate around={150:(0,0)}] (2, -1.8) -- (2, 1.8);

\node[above= 0mm of {(0, 0)}]{\scriptsize  \color{red} keep-out-zone};
\node[above= 1mm of {(-5, 0.5)}]{\footnotesize initial};
\node[above= 1mm of {(4, 0.5)}]{\footnotesize target};

\end{tikzpicture}
    \caption{Trajectory-planning with rotating halfspace constraint.}
    \label{fig: path planning}
\end{figure}
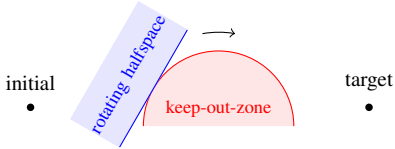

We consider a trajectory-planning (finite horizon optimal control) problem where the goal is to track a beeline trajectory from initial to target position while avoiding collision with a circular keep-out-zone; see Fig.~\ref{fig: path planning} for an illustration. Here, this problem is an instant of an MPC problem, which is solved repetitively as new state information becomes available. The dynamics of the system is modeled as a double integrator with sampling time \(0.5\) s. The system is subject to \(\ell_2\) norm constraints on its velocity \(q\in\mathbb{R}^2\) and acceleration input \(u\in\mathbb{R}^2\). In addition, a rotating half-space constraint is imposed on its position \(p\in\mathbb{R}^2\), which convexifies the keep-out-zone; see \cite{zagaris2018model} for a detailed discussion. We model this tracking problem as a special case of problem \eqref{opt: MPC} with the following choice of parameters:
\begin{subequations}
\begin{align}
    &A_{t-1}=\begin{bsmallmatrix}
    1 & 0 & 0.5 & 0\\
    0 & 1 & 0 & 0.5\\
    0 & 0 & 1 & 0\\
    0 & 0 & 0 & 1
    \end{bsmallmatrix}, \: B_{t-1}=\begin{bsmallmatrix}
    0.125 & 0\\
    0 & 0.125\\
    0.5 & 0\\
    0 & 0.5
    \end{bsmallmatrix}, \label{eqn: dynamics}\\
    &Q_t=\diag (1, 0.5, 1, 0.5), \enskip  R_{t-1}=\diag(1, 0.5), \label{eqn: cost}\\
    &\mathbb{X}_t=\left\{\:
     x=\begin{bsmallmatrix}
     p\\
     q
     \end{bsmallmatrix}\: 
    \middle\vert\:
     \begin{bsmallmatrix}
     -\cos(\theta t)\\
     \sin(\theta t)
     \end{bsmallmatrix}^\top p\geq 2,\: \norm{q}_2\leq 0.25
  \right\}, \label{eqn: state constraints}\\
  &\mathbb{U}_{t-1}=\{u|\norm{u}_2\leq 0.1\}, \enskip x_0=\begin{bmatrix}
  -2.5 & 0.6 & 0 & 0
  \end{bmatrix}^\top ,
  \end{align}
\end{subequations}
for \(1\leq t\leq T\), where \(\theta=0.063\) in \eqref{eqn: state constraints} is a constant rotation rate \cite{zagaris2018model}. Note that \(Q_t\) and \(R_t\) in \eqref{eqn: cost} are diagonal but not identity, which is common in practice. The reference trajectory \(\{y_t\}_{t=1}^T\) in \eqref{opt: MPC} is chosen as a beeline trajectory from initial position \((-2.5, 0.6)\) to target position \((2.9, 0.3)\) without considering the position constraint on \(p\) in \eqref{eqn: state constraints}.

We compare our method against all the other methods reviewed in Section~\ref{section: related}. In terms of step sizes: for our method \eqref{alg: PI}, we choose \(\alpha^k\) and \(\beta^k\) according to Theorem~\ref{thm: convex} and Theorem~\ref{thm: strongly convex} for constant and, respectively, varying step sizes; for dual fast gradient method, we choose \(\alpha\) according to \cite[Thm.1]{richter2013certification}; for Chambolle \& Pock method (C \& P), we choose the constant step sizes in \eqref{alg: CP const} according to \cite[Rem. 1]{chambolle2016ergodic}, and the varying step sizes in \eqref{alg: CP var} according to the ``optimal rule'' in  \cite[Sec. 5.2]{chambolle2016ergodic}; for ADMM, we choose \(\alpha=2\) as suggested in \cite{jerez2014embedded}. In addition, the inner loop iterations used by each iteration of method \eqref{alg: Nesterov} are warm-started using results from the last outer iteration and terminated if \(\norm{z^{j+1}-z^j}_2/\norm{z^j}_2\leq \epsilon_{\text{inner}}\), where \(\epsilon_{\text{inner}}\) is chosen between \(0.1\%\) and \(0.01\%\). 

We summarize our results as follows. Fig.~\ref{fig: single sample} shows the convergence over iterations of different algorithms with same initialization for \(T=25\), where \(z^\star\) is computed using  ECOS \cite{domahidi2013ecos} together with JuMP \cite{dunning2017jump}. 
Fig.~\ref{fig: statistics} shows the computation costed by different algorithms for \(T=\{5, 15, 25, 35, 45\}\) to reach the tolerance for constraint violation (we use \(\ell_\infty\)-norm since it measures the maximum pointwise constraint violation along the trajectory), where each data point is averaged over \(200\) independent experiments using initialization sampled from standard normal distribution. Note that we omitted method \eqref{alg: CP const} and \eqref{alg: PI} with constant step sizes in Fig.~\ref{fig: fine epsilon} due to their slow convergence. 

In these simulations, our method with varying step sizes (var.) outperforms the others. Our method with constant step sizes (const.) converges slower, but is almost identically to Chambolle \& Pock method (C \& P) with constant step sizes (const.), since they do not exploit the strong convexity of the objective functions.

\begin{figure}[ht!]
\centering
  \begin{subfigure}{0.48\columnwidth}
  \includegraphics[trim=0.2cm 0.1cm 0.2cm 0.1cm,width=\textwidth]{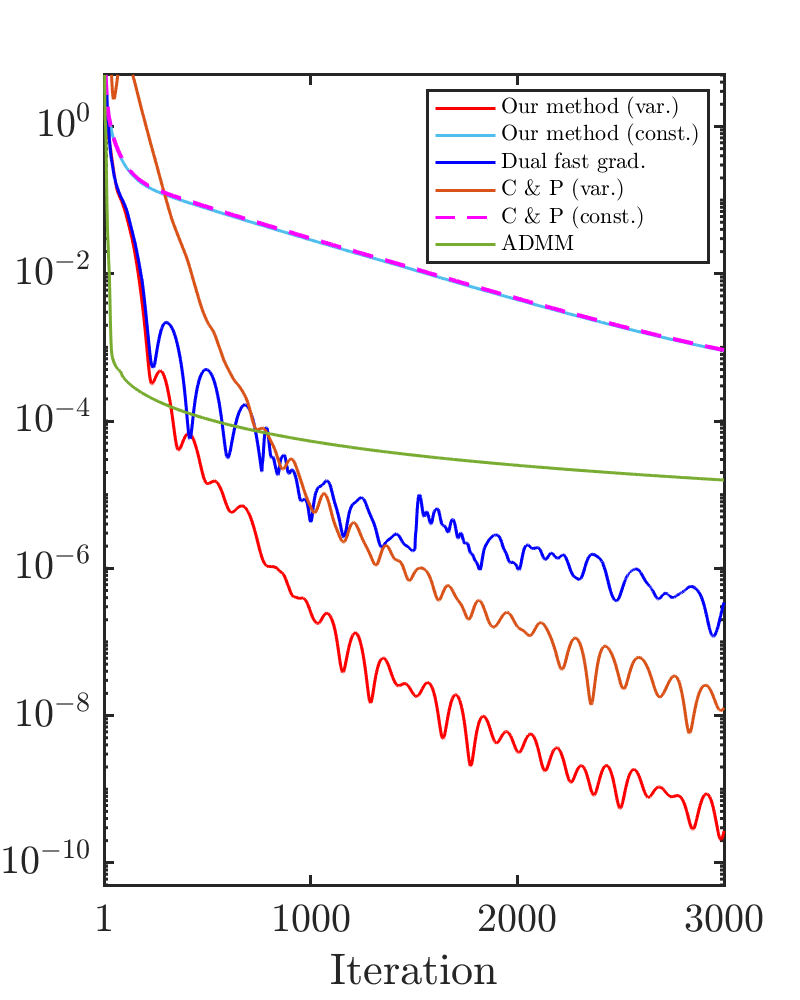}
  \caption{\(\norm{z-z^\star}_2^2\).}
  \end{subfigure}
  \begin{subfigure}{0.48\columnwidth}
  \includegraphics[trim=0.2cm 0.1cm 0.2cm 0.1cm,width=\textwidth]{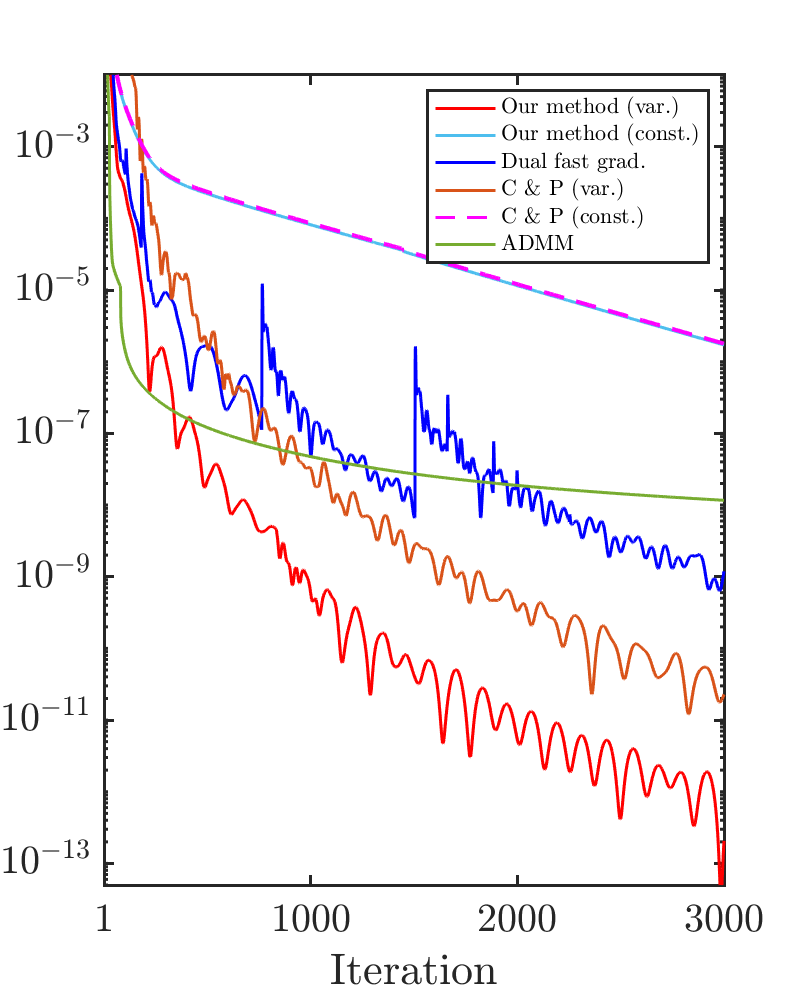}
  \caption{ \(\norm{Gz-g}_2^2\).} 
  \end{subfigure} 
  \caption{Convergence over iterations for \(T=25\).}
  \label{fig: single sample}
\end{figure}

\begin{figure}[ht!]
\centering
  \begin{subfigure}{0.48\columnwidth}
  \includegraphics[trim=0.1cm 0.1cm 0.1cm 0.1cm,width=\textwidth]{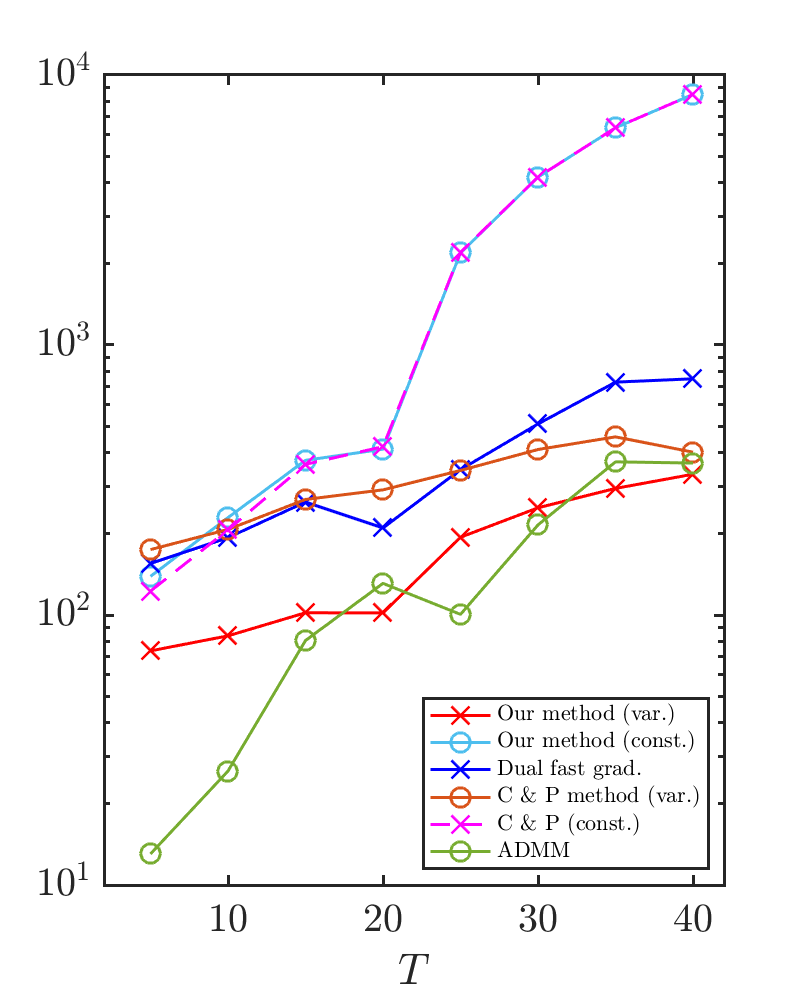}
  \caption{\(\epsilon=10^{-3}\).}
  \label{fig: coarse epsilon}
  \end{subfigure} 
  \begin{subfigure}{0.48\columnwidth}
  \includegraphics[trim=0.1cm 0.1cm 0.1cm 0.1cm,width=\textwidth]{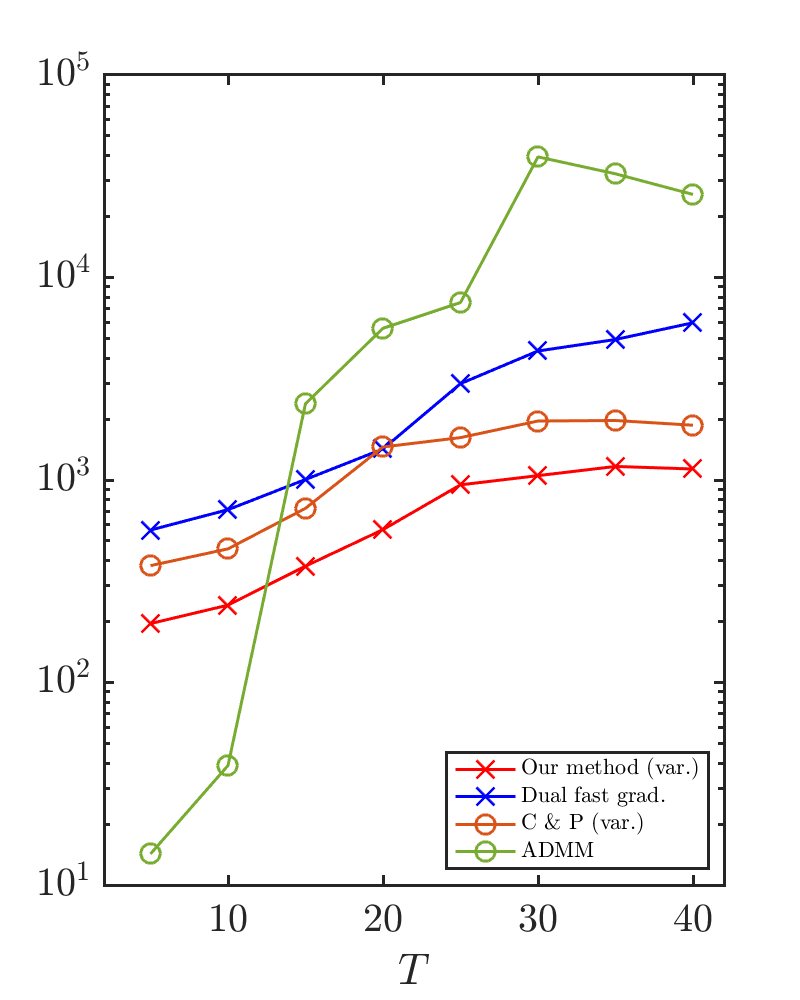}
  \caption{\(\epsilon=10^{-5}\).}
  \label{fig: fine epsilon}
  \end{subfigure}
  \caption{Number of projection $\pi_\mathbb{Z}[\cdot]$ costed to reach condition $\|Gz -g\|_\infty\leq \epsilon$. Each data point is averaged over 200 simulations using random initialization.}
  \label{fig: statistics}
\end{figure}

\section{Conclusion}\label{section: conclusion}
We introduced a novel first order primal-dual method for MPC, which uses a single projection onto the state and input constraint set per-iteration. We prove the convergence rate of both the distance to optimum and the constraint violation along different sequences of averaged iterates. Our method not only enjoys simple interpretation based on PI feedback, but also outperforms existing methods in numerical experiments.
Future directions include real-time implementation, faster empirical convergence using preconditioning, and other control-inspired optimization algorithms.




\bibliographystyle{IEEEtran}
\bibliography{IEEEabrv,reference}

\end{document}